\let\uml\"
\theoremstyle{plain}
\newtheorem{thm}{Theorem}[section]
\newtheorem{lema}{Lemma}[section]
\newtheorem{prop}{Proposition}[section]
\newtheorem{cor}{Corollary}[section]
\newcommand{\adef}{\begin{defn}}
\newcommand{\zdef}{\end{defn}}
\newtheorem{defn}[thm]{Definition}
\theoremstyle{remark}
\newcommand{\N}{\mathbb{N}}
\newcommand{\To}{\longrightarrow}
\def\PO{\operatorname{PO}}
\def\dens{\operatorname{dens}}
\def\dom{\operatorname{dom}}
\def\codom{\operatorname{cod}}
\newcommand{\aproof}{\begin{proof}}
\newcommand{\zproof}{\end{proof}}
\title{$1$-complemented subspaces\\ of Banach spaces of universal disposition}
\author{Jes\'us M.F. Castillo}
\address{Departamento de Matematicas,
Universidad de Extremadura, 06071 Badajoz, Espa\~{n}a} \email{castillo@unex.es}
\author{Yolanda Moreno}
\address{Departamento de Matematicas, Escuela Polit\'ecnica,
Universidad de Extremadura, 10003 C\'aceres, Espa\~{n}a} \email{ymoreno@unex.es}
\author{Marilda A. Sim\~{o}es}
\address{Dipartimento di Matematica "G. Castelnuovo",
Universit\'a di Roma "La Sapienza", P.le A. Moro 2,
00185 Roma, Italia}\email{simoes@mat.uniroma1.it}
\thanks{The research of the two first authors has been supported in part by project MTM2016-76958-C2-1-P and Ayuda a Grupos GR15152 de la Junta de Extremadura}
\subjclass[2010]{46B03, 46M40}
\begin{document}
\maketitle

\begin{abstract} We first unify all notions of partial injectivity appearing in the literature ---(universal) separable injectivity, (universal) $\aleph$-injectivity --- in the notion of $(\alpha, \beta)$-injectivity ($(\alpha, \beta)_\lambda$-injectivity if the parameter $\lambda$ has to be specified). Then, extend the notion of space of universal disposition to space of universal $(\alpha, \beta)$-disposition. Finally, we characterize the $1$-complemented subspaces of spaces of universal $(\alpha, \beta)$-disposition as precisely the spaces $(\alpha, \beta)_1$-injective.\end{abstract}

The purpose of this paper is to establish the connection between spaces (universally) $1$-separably injective and the spaces of universal disposition. A Banach space $E$ is said to be $\lambda$-\emph{separably injective} if
for every separable Banach space $X$ and each subspace $Y\subset
X$, every operator $t:Y\to E$ extends to an operator $T:X\to E$ with $\|T\|\leq \lambda\|t\|$. Chapter 3 considers the notion of universal disposition. Gurariy \cite{Gurariiold} introduced the property of a Banach space to be of universal disposition with respect to a given class $\mathfrak M$ of Banach spaces (or $\mathfrak M$-universal disposition, in short): $U$ is said to be of universal disposition for $\mathfrak M$ if given $A, B \in \mathfrak M$ and into isometries $u: A\to U$ and $\imath:A\to B$ there is an into isometry $u': B\to U$ such
that $u=u'\imath$. We are particularly interested in this section in the choices $\mathfrak M\in \{ \mathfrak F, \mathscr S \}$, where $\mathfrak F$ (resp $\mathscr S$) denote the classes of finite-dimensional (resp. separable) Banach spaces. \medskip

The paper \cite{accgm2} started the study of spaces of universal disposition after Gurariy \cite{Gurariiold}. All available results and several new were then presented in Chapter 4 of the monograph \cite{accgmLN}. Several overlooked and/or left open problems in the monograph were solved in \cite{castsimo}. An up-dated account of the facts known so far could be the following:

 \begin{itemize}
 \item Spaces of $\mathscr S$-universal disposition are $1$-separably injective \cite{accgm2} and \cite[Thm. 3.5]{accgmLN}.
 \item There are spaces of $\mathfrak F$-universal disposition that are not separably injective. Consequently, there are spaces of  $\mathfrak F$-universal disposition that are not of  $\mathscr S$-universal disposition \cite{castsimo}.
  \item $1$-separably injective are Lindenstrauss and Grothendieck spaces containing $c_0$  \cite{accgm1}.
 \item There are spaces of $\mathfrak F$-universal disposition containing complemented copies of $c_0$. Consequently, there are spaces of $\mathfrak F$-universal disposition that are not Grothendieck \cite{accgm2} and \cite[Prop. 3.16]{accgmLN}.
 \item Under {\sf CH}, $1$-separably injective spaces contain $\ell_\infty$ \cite{accgm1,accgmLN}. This can be no longer true without {\sf CH} \cite{aviko}.
  \end{itemize}

Our first aim is to connect the notions of $1$-separably injective space and space of universal disposition. We display the basic construction technique already used, in different versions, in \cite{accgm2,accgmLN,castsua,castsimo}:

\subsection*{The push-out construction} Recall first the push-out construction; which, given an isometry $u:A\to B$ and an operator
$t: A\to E$ will provides us with  an extension of $t$ through $u$ at the
cost of embedding $E$ in a larger space as it is showed in the diagram
$$
\begin{CD}
A @> u >> B\\
@Vt VV @VV t' V\\
E@> u' >> \PO
\end{CD}
$$
where $t'u=u't$. It is important to realize
that $u'$ is again an isometry and that $t'$ is a
contraction or an isometry if $t$ is.

\subsection*{The basic multiple push-out construction} To perform our basic construction we need the following data:
\begin{itemize}
\item A starting Banach space $X$.
\item A class $\mathfrak{M}$ of Banach spaces.
\item The family $\mathfrak{J}$ of all isometries acting between the elements of $\mathfrak{M}$.
\item A family $\mathfrak{L}$ of norm one $X$-valued operators defined on elements of $\mathfrak{M}$.
\item The smallest ordinal $\omega(\aleph)$ with cardinality $\aleph$.
\end{itemize}
For any operator $s : A\to B$, we establish $\dom(s) = A$ and $\codom (s) = B$. Notice that the codomain of an operator is usually larger than its range, and that the unique codomain of the elements of $\mathfrak{L}$ is $X$. Set $\Gamma=\{(u,t)\in \mathfrak{J}\times \frak L: \dom u=\dom t\}$ and
consider the Banach spaces of summable families $\ell_1(\Gamma,
\dom u)$ and $\ell_1(\Gamma, \codom u)$. We have an obvious
isometry
$$
\oplus\mathfrak{J} :\ell_1(\Gamma, \dom u)\To \ell_1(\Gamma, \codom
u)$$ defined by  $(x_{(u,t)})_{(u,t)\in\Gamma}\longmapsto
(u(x_{(u,t)}))_{(u,t)\in\Gamma} $; and a contractive operator
$$
\Sigma\frak L :\ell_1(\Gamma, \dom u)\To X,$$ given by
$(x_{(u,t)})_{(u,t)\in\Gamma}\longmapsto
\sum_{(u,t)\in\Gamma}t(x_{(u,t)})$. Observe that the notation is
slightly imprecise since both $ \oplus\mathfrak{J}$ and $ \Sigma\frak
L$ depend on $\Gamma$. We can form their push-out diagram
$$
\xymatrix{
\ell_1(\Gamma, \dom u)\ar[r]^{\oplus\mathfrak{J}} \ar[d]_{\Sigma\frak L} & \ell_1(\Gamma, \codom u)\ar[d]\\
X \ar[r]^{\imath_{(0,1)}} &\PO
}
$$
Thus, if we call $\mathcal S^0(X) = X$ then we have constructed the space
$\mathcal S^1(X) = \PO$ and an isometric enlargement of $\imath_{(0,1)}: \mathcal S^0(X)\to  \mathcal S^1(X) $ such that for every $t: A\to X$ in $\mathfrak L$, the operator $\imath t$ can be
extended to an operator $t':B \to \PO$ through any embedding
$u:A\to B$ in $\mathfrak{J}$ provided $\dom u=\dom t= A$. \medskip

In the next step we keep the family $\mathfrak{J}$ of isometries, replace the starting space $X$ by $\mathcal S^1(X)$
and $\frak L$ by a family $\mathfrak L_1$ of norm one operators $\dom u \to \mathcal S^1(X)$, $u\in \mathfrak J$, and proceed
again. More precisely, the inductive step is as follows. Suppose we have constructed the directed system $(\mathcal S^\alpha(X))_{\alpha<\beta}$ for all ordinals $\alpha<\beta$, including the corresponding linking maps $\imath_{(\alpha,\gamma)}: \mathcal S^\alpha(X)\To \mathcal S^\gamma(X)$ for $\alpha<\gamma<\beta$. To define $\mathcal S^\beta(X)$ and the maps $\imath_{(\alpha,\beta)}: \mathcal S^\alpha(X) \To \mathcal S^\beta(X)$ we consider separately two cases, as usual: if $\beta$ is a limit ordinal, then we take $\mathcal S^\beta(X)$ as the direct limit of the system $(\mathcal S^\alpha(X))_{\alpha<\beta}$ and
$\imath_{(\alpha,\beta)}: \mathcal S^\alpha(X) \To \mathcal S^\beta(X)$ the natural inclusion map. Otherwise $\beta=\alpha + 1$ is a successor ordinal and  we construct $\mathcal S^{\beta}(X)$ applying the push-out construction as above with the following data:  $\mathcal S^\alpha(X)$ is the starting space, $\frak
J$ keeps being the set of all isometries acting between the elements of
$\mathfrak M$ and
$\frak L_\alpha$ is the family of all isometries $t:S\to \mathcal S^\alpha(X)$, where $S\in \mathfrak M$. We then set $\Gamma_\alpha=\{(u,t)\in \frak J\times \frak L_\alpha:
\dom u=\dom t\}$ and make the push-out
\begin{equation}\label{see0}
\begin{CD}
\ell_1(\Gamma_\alpha, \dom u)@> \oplus\frak I_\alpha >> \ell_1(\Gamma_\alpha, \codom u)\\
@V \Sigma\frak L_\alpha VV @VVV\\
\mathcal S^\alpha (X)@>>> \PO
\end{CD}
\end{equation}
thus obtaining ${\mathcal S}^{\alpha + 1}(X)=\PO$.
The embedding $\imath_{(\alpha,\beta)}$ is the lower arrow in the above diagram; by composition with $\imath_{(\alpha,\beta)}$ we get the embeddings $\imath_{(\gamma,\beta)} =\imath_{(\alpha, \beta)}\imath_{(\gamma, \alpha)}$, for all $\gamma < \alpha$.\medskip

\begin{prop}\label{coimes} A Banach space is $1$-separably injective if and only if it is a $1$-complemented subspace of a space of $\mathscr S$-universal disposition.
\end{prop}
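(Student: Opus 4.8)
The plan is to prove the two implications separately, the forward one being routine and the reverse one carrying all the content.

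The implication that a $1$-complemented subspace $E$ of a space $U$ of $\mathscr S$-universal disposition is $1$-separably injective I would dispatch first. By the first fact recalled above, $U$ is itself $1$-separably injective, so all that is needed is that this property is inherited by $1$-complemented subspaces. Fix a norm-one projection $P\colon U\to E$, a separable space $X$ with a subspace $Y$, and an operator $t\colon Y\to E$ of norm one. Viewing $t$ as $E$-valued inside $U$ and using the $1$-separable injectivity of $U$, I would extend it to $T\colon X\to U$ with $\|T\|\le 1$; then $PT\colon X\to E$ extends $t$ and has $\|PT\|\le\|P\|\,\|T\|\le 1$. Hence $E$ is $1$-separably injective.

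For the converse I would feed the multiple push-out machine with the starting space $X=E$ and the class $\mathfrak M=\mathscr S$, iterating up to the first uncountable ordinal $\omega_1=\omega(\aleph_1)$, and set $U=\mathcal S^{\omega_1}(E)$. That $U$ is of $\mathscr S$-universal disposition is the standard outcome of this construction: any isometry $u\colon A\to U$ out of a separable $A$ has separable, hence countable-dense, range, so by the uncountable cofinality of $\omega_1$ it already lands in some $\mathcal S^\alpha(E)$ with $\alpha<\omega_1$; thus $u\in\mathfrak L_\alpha$ and it is extended at stage $\alpha+1$ along every separable embedding $\imath\colon A\to B$ in $\mathfrak J$. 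The whole point is then to exhibit a norm-one projection of $U$ onto $E$.

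I would build such a projection by transfinite induction along the tower, producing a coherent family of contractions $P_\alpha\colon\mathcal S^\alpha(E)\to E$ with $P_\alpha|_E=\mathrm{id}_E$ and $P_\gamma\,\imath_{(\alpha,\gamma)}=P_\alpha$ for $\alpha<\gamma$. Start from $P_0=\mathrm{id}_E$. At a successor $\beta=\alpha+1$ I would apply the universal property of the push-out in \eqref{see0}: it suffices to produce a contraction $\ell_1(\Gamma_\alpha,\codom u)\to E$ that agrees with $P_\alpha\,\Sigma\mathfrak L_\alpha$ on $\ell_1(\Gamma_\alpha,\dom u)$. Coordinatewise, for each $(u,t)\in\Gamma_\alpha$ with $u\colon A\to B$ this asks for a map $B\to E$ composing with $u$ to give $P_\alpha t\colon A\to E$ --- that is, an extension of the norm-one operator $P_\alpha t$ along the separable embedding $u$, which exists with norm at most one precisely because $E$ is $1$-separably injective. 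Since the domain is an $\ell_1$-sum, assembling these coordinate extensions yields a contraction $Q_\alpha$, and the push-out then furnishes $P_\beta$ with $P_\beta\,\imath_{(\alpha,\beta)}=P_\alpha$ and $\|P_\beta\|\le\max(\|P_\alpha\|,\|Q_\alpha\|)\le 1$. At a limit $\beta$ the maps $P_\alpha$ agree on the dense union $\bigcup_{\alpha<\beta}\imath_{(\alpha,\beta)}(\mathcal S^\alpha(E))$ and extend by continuity to a contraction $P_\beta$. The map $P=P_{\omega_1}$ is then a norm-one projection of $U$ onto $E$, which is what we want.

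The delicate point --- and the reason the isometric constant $1$ cannot be relaxed here --- is maintaining $\|P_\alpha\|\le 1$ across the uncountably many steps of the recursion. Each successor step is an extension problem along a separable embedding, so any constant larger than $1$ in the separable injectivity of $E$ would accumulate along the tower and be unbounded at the limit, where $\|P_\beta\|=\sup_{\alpha<\beta}\|P_\alpha\|$; it is exactly the contractivity granted by $1$-separable injectivity, together with the $\ell_1$-norm on the gluing spaces that converts a sum of coordinate bounds into their supremum, that keeps every $Q_\alpha$ and hence every $P_\alpha$ a contraction.
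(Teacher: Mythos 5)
Your proof is correct and follows essentially the same route as the paper's: the forward implication via composing an extension into $U$ with the norm-one projection, and the converse by running the multiple push-out construction up to $\omega_1$ starting from $E$ and using $1$-separable injectivity at each successor stage to extend the (composed) operators of $\Sigma\mathfrak L_\alpha$ coordinatewise, so that the push-out property yields a contractive projection onto $E$. You merely spell out in more detail the transfinite iteration and the limit step that the paper compresses into ``the argument can be transfinitely iterated,'' and you supply the cofinality argument for universal disposition that the paper delegates to a citation.
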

\begin{proof}  Since spaces of $\mathscr S$-universal disposition are $1$-separably injective \cite{accgm2}, their $1$-complemented subspaces also are $1$-separably injective.
To show the converse, let $\Theta$ be a $1$-separably injective space.
Set now the following input data in the basic construction:
\begin{itemize}
\item $X=\Theta$.
\item $\mathfrak M = \mathscr S$
\item The family $\mathfrak{J}$ of all isometries acting between the elements of $\mathscr S$.
\item The family $\mathfrak{L}$ of into $\Theta$-valued isometries defined on elements of $\mathscr S$.
\item The first uncountable ordinal $\omega_1$.
\end{itemize}

From the construction we get the space $\mathcal S^{\omega_1}(\Theta)$. This space is of $\mathscr S$-universal disposition (cf. \cite[Prop. 3.3]{accgmLN}).\medskip

\noindent \textbf{Claim. If $\Theta$ is $1$-separably injective then it is $1$-complemented in $\mathcal S^{\omega_1}(\Theta)$}.\medskip

\noindent\emph{Proof of the Claim.} It is clear operators from separable spaces into $\Theta$ extend with the same norm to separable superspaces. Thus occurs to the operators forming $\Sigma \mathfrak L$. Pick the identity $1_\Theta$ and observe that the
composition $1_\Theta \Sigma \mathfrak L$ extends to an operator $\ell_1(\Gamma, \codom
u) \to \Theta$; hence, by the push-out property, there is an operator
 $\mathcal S^1(\Theta)\to \Theta$ that extends $1_\Theta$ through $\imath_1: \Theta \to  \mathcal S^1(\Theta)$. In other words, $\Theta$ is $1$-complemented in  $\mathcal S^1(\Theta)$. The argument can be transfinitely iterated to conclude that $\Theta$ is $1$-complemented in  $\mathcal S^{\omega_1}(\Theta)$.\end{proof}

A basic form of Theorem \ref{coimes} appears in \cite[Thm. 3.22]{kubis} as:\emph{ Assume the continuum hypothesis. Let V be the unique Banach
space of density $\aleph_1$ that is of universal disposition for separable spaces. A Banach
space of density $\leq \aleph_1$ is isometric to a 1-complemented subspace of V if and only
if it is 1-separably injective.}\medskip

The paper \cite{accgm4} (see also \cite[Chapter 5]{accgmLN}) contains  the higher cardinal generalization  of the notion of separable injectivity as follows. A Banach space $E$ is said to be \emph{$\aleph$-injective} if for every Banach space $X$ with
density character $<\aleph$ and each subspace $Y\subset X$ every operator $t:Y\to E$ can be extended
to an operator $T:X\to E$. When for every operator $t$ there exists some extension $T$ such that $\|T\|\leq\lambda\|t\|$ we say that $E$
is \emph{$(\lambda,\aleph)$-injective}.\medskip

The case $\aleph = \aleph_1$ corresponds to separable injectivity and thus the resulting name for separable injectivity is $\aleph_1$-injectivity (not $\aleph_0$-injectivity),
which is perhaps surprising. Nevertheless, we have followed the uses of set theory where properties labeled by a cardinal
$\aleph$ always indicate that something happens for sets whose cardinality is strictly lesser
than $\aleph$. Let $\mathscr S_\aleph$ the class of all Banach spaces having density character less than $\aleph$ (thus, $\mathscr S = \mathscr S_{\aleph_1}$). The general version of Proposition \ref{coimes} is

\begin{prop}\label{1asi} A Banach space is $(1, \aleph)$-injective if and only if it is a $1$-complemented subspace of a space of $\mathscr S_\aleph$-universal disposition.
\end{prop}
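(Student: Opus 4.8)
The plan is to transcribe the proof of Proposition \ref{coimes} verbatim, replacing \emph{separable} by \emph{of density character $<\aleph$} throughout, and running the basic multiple push-out construction up to the ordinal $\omega(\aleph)$ instead of $\omega_1$. For the easy implication I would invoke the higher-cardinal analogue, established in \cite{accgm4}, of the cited fact that $\mathscr S$-universal disposition forces $1$-separable injectivity: a space of $\mathscr S_\aleph$-universal disposition is $(1,\aleph)$-injective. Since a norm-one retract of a $(1,\aleph)$-injective space is again $(1,\aleph)$-injective (extend $st$ and then compose with the retraction $r$, where $s$ is the isometric section), every $1$-complemented subspace of such a space is $(1,\aleph)$-injective.

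For the converse, let $\Theta$ be $(1,\aleph)$-injective. I would feed the construction with $X=\Theta$, with $\mathfrak M=\mathscr S_\aleph$, with $\mathfrak J$ the class of all isometries between elements of $\mathscr S_\aleph$, with $\mathfrak L$ the family of into $\Theta$-valued isometries defined on elements of $\mathscr S_\aleph$, and iterate up to $\omega(\aleph)$. That $\mathcal S^{\omega(\aleph)}(\Theta)$ is then of $\mathscr S_\aleph$-universal disposition is the exact analogue of \cite[Prop. 3.3]{accgmLN}: any copy of a space of density $<\aleph$ inside the direct limit is captured at some stage $\alpha<\omega(\aleph)$ by a cofinality argument, whereupon the push-out at stage $\alpha+1$ supplies the required isometric extension. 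It remains to exhibit a norm-one retraction of $\mathcal S^{\omega(\aleph)}(\Theta)$ onto $\Theta$.

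This retraction I would build by transfinite recursion, producing contractions $r_\alpha:\mathcal S^\alpha(\Theta)\to\Theta$ that satisfy $r_\alpha\imath_{(0,\alpha)}=1_\Theta$ together with the cocone condition $r_\alpha\imath_{(\gamma,\alpha)}=r_\gamma$ for $\gamma<\alpha$, starting from $r_0=1_\Theta$. At a successor $\alpha+1$ the crux is that the block-diagonal isometry $\oplus\mathfrak I_\alpha$ is handled coordinatewise: for each $(u,t)\in\Gamma_\alpha$ the contraction $r_\alpha t:\dom u\to\Theta$ extends, by $(1,\aleph)$-injectivity of $\Theta$ and because $\dom u,\codom u\in\mathscr S_\aleph$, to a contraction $\codom u\to\Theta$ of the same norm; assembling these across the $\ell_1$-sum yields a contraction $\ell_1(\Gamma_\alpha,\codom u)\to\Theta$ agreeing with $r_\alpha\,\Sigma\mathfrak L_\alpha$ on $\ell_1(\Gamma_\alpha,\dom u)$. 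The universal property of the push-out \eqref{see0} then delivers $r_{\alpha+1}$ extending $r_\alpha$; as the induced map off an $\ell_1$-push-out has norm at most the maximum of its two constituents, $\|r_{\alpha+1}\|\le 1$, with equality forced by $r_{\alpha+1}\imath_{(0,\alpha+1)}=1_\Theta$. At a limit $\beta$ the cocone $(r_\alpha)_{\alpha<\beta}$ factors through the direct limit, giving a contraction $r_\beta$ with $r_\beta\imath_{(0,\beta)}=1_\Theta$ by density of $\bigcup_{\alpha<\beta}\imath_{(\alpha,\beta)}(\mathcal S^\alpha(\Theta))$. Taking $\alpha=\omega(\aleph)$ exhibits $\Theta$ as $1$-complemented in $\mathcal S^{\omega(\aleph)}(\Theta)$.

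The main obstacle I anticipate is not any isolated step but the coherence of the recursion: one must verify that the $r_\alpha$ genuinely form a cocone, so that they survive the passage to direct limits, \emph{while} keeping every $\|r_\alpha\|$ exactly equal to $1$. The norm control rests on two facts acting in tandem — the coordinatewise extensions inherit the norm exactly from $(1,\aleph)$-injectivity, and the $\ell_1$-push-out never inflates the induced norm beyond the maximum of its ingredients. A secondary point, absent in the separable case, is confirming that $\omega(\aleph)$ stages truly suffice for $\mathscr S_\aleph$-universal disposition, which is precisely where the cofinality of $\omega(\aleph)$ relative to $\aleph$ enters and which I would import from \cite[Prop. 3.3]{accgmLN}.
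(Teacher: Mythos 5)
Your proposal follows the paper's route almost exactly: the easy direction via stability of $(1,\aleph)$-injectivity under norm-one projections, and the converse by feeding the multiple push-out construction with $\mathfrak M=\mathscr S_\aleph$ and building a norm-one retraction onto $\Theta$ by transfinite recursion. Your successor/limit recursion for the maps $r_\alpha$ (coordinatewise extension over $\Gamma_\alpha$ by $(1,\aleph)$-injectivity, then the universal property of the $\ell_1$-push-out, then passage to direct limits) is in fact a more careful rendering of what the paper dismisses with ``the argument can be transfinitely iterated,'' and that part is sound.

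The one genuine divergence is the length of the iteration, and it opens a gap: you stop at $\omega(\aleph)$, whereas the paper iterates up to $\omega(2^{\aleph})$. The capture step you invoke --- any copy of a space of density $<\aleph$ inside the direct limit is caught at some stage $\alpha<\omega(\aleph)$ ``by a cofinality argument'' --- requires the cofinality of the iteration length to be at least $\aleph$. But $\operatorname{cf}(\omega(\aleph))=\operatorname{cf}(\aleph)$, which is strictly smaller than $\aleph$ whenever $\aleph$ is singular (e.g.\ $\aleph=\aleph_\omega$): a subspace of the limit with density character $\operatorname{cf}(\aleph)<\aleph$ can then meet cofinally many stages without being contained in any single $\mathcal S^\alpha(\Theta)$, and the push-out at stage $\alpha+1$ has no isometry $j$ to extend, so $\mathscr S_\aleph$-universal disposition of $\mathcal S^{\omega(\aleph)}(\Theta)$ is not established. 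This is precisely why the paper runs the construction to $\omega(2^{\aleph})$ and appeals to the fact that $\operatorname{cf}(2^{\aleph})>\aleph$ (K\"onig's theorem) rather than to regularity of $\aleph$; the statement you import from \cite[Prop.~3.3]{accgmLN} concerns the separable case, where $\aleph_1$ is regular and the issue is invisible. For regular $\aleph$ your shorter iteration is perfectly fine (and more economical), but as written the proof does not cover singular cardinals; replacing $\omega(\aleph)$ by $\omega(2^{\aleph})$ --- or by any limit ordinal of cofinality $\geq\aleph$ --- repairs it with no other change to your argument.
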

\begin{proof} The proof that spaces of $\mathscr S_\aleph$-universal disposition are $(1, \aleph)$-injective is the same as that for $\mathscr S$ (\cite{accgm2} and \cite[Thm. 3.5]{accgmLN}); thus, their $1$-complemented subspaces also are $(1, \aleph)$-injective. To show the converse, let $\Theta$ be a $(1, \aleph)$-injective space. Set now the input data as:
\begin{itemize}
\item $X=\Theta$.
\item $\mathfrak M = \mathscr S_\aleph$
\item The family $\mathfrak{J}$ of all isometries acting between the elements of $\mathscr S_\aleph$.
\item The family $\mathfrak{L}$ of into $\Theta$-valued isometries defined on elements of $\mathscr S_\aleph$.
\item The ordinal $\omega(2^\aleph)$.
\end{itemize}

\noindent \textbf{Claim. The space $\mathcal S^{\omega(2^\aleph)}(X)$ is a space of  $\mathscr S_\aleph$-universal disposition.}\medskip

\noindent\emph{Proof of the Claim.} Let $A, B \in \mathscr S_\aleph$ and $i: A\to B$ an into isometry. Let $j:A\to X$ an into isometry.
Since the cofinality of $2^\aleph$ is greater than $\aleph$, the image of $j$ must fall into some $\mathcal S^\alpha(X)$ for some $\alpha<2^\aleph$. Now, since both $j$ and $i$ are part of the amalgam of operators respect to with one does push out, there is an into isometry $j': B\to \mathcal S^{\alpha+1}(X)$ extending $j$. In particular, there is an into isometry $j': B\to \mathcal S^{\omega(2^\aleph)}(X)$ extending $j$. \hfill{$\square$}\medskip

\noindent \textbf{Claim. If $\Theta$ is $(1, \aleph)$-injective then it is $1$-complemented in $\mathcal S^{\omega(2^\aleph)}(\Theta)$}\medskip

The proof is exactly as the one we did for $\mathscr S$. This concludes the proof of the Proposition.\end{proof}

We want now to handle a different set of ideas. The papers \cite{accgm1,accgmLN} also introduce the notion of universally separably injective space as follows: a Banach space $E$ is said to be \emph{universally $\lambda$-separably injective} if for every Banach space $X$ and each separable subspace $Y\subset
X$, every operator $t:Y\to E$ extends to an operator $T:X\to E$ with $\|T\|\leq \lambda\|t\|$. It turns out that there are examples of universally separably injective spaces appearing in nature, such as:

\begin{itemize}
\item The Pe\l czy\'nski-Sudakov \cite{pelisuda} spaces $\ell_\infty^{\aleph}(\Gamma)$.
\item The space $\ell_\infty/c_0$ and, in general, quotients of injective spaces by separably injective spaces. \cite{accgm1}
\item Ultrapowers of $\mathcal L_\infty$-spaces with respect to countably incomplete ultrafilters on $\N$. \cite[Thm.4.4]{accgmLN}
\end{itemize}

The higher cardinal generalization of this notion was studied in \cite{accgm4} (see also \cite[Chapter 5]{accgmLN}). The space $E$ is said to be \emph{universally $\aleph$-injective} if for every space $X$ and
each subspace $Y\subset X$ with density character $< \aleph$, every operator $t:Y\to E$ can be extended
to an operator $T:X\to E$.  When for every operator $t$ there exists some extension $T$ such that $\|T\|\leq\lambda\|t\|$ we say that $E$
is universally \emph{ $(\lambda,\aleph)$-injective}. To obtain analogues to Propositions \ref{coimes} and \ref{1asi} we need to find a common generalization of the notions of $\aleph$-injectivity and universal $\aleph$-injectivity and then adapt the notion of universal disposition.\medskip

\noindent \textbf{Definition.} Let $(\alpha, \beta)$ two infinite cardinals, $\alpha\leq \beta$. The space $E$ is said to be $(\alpha, \beta)$-injective if every
operator $t: A\to E$  from a subspace  $A\subset B$ with density character $< \alpha$ can be extended
to an operator $T: B\to E$.  When for every there exists some extension $T$ such that $\|T\|\leq\lambda\|t\|$ we say that $E$
is $(\alpha, \beta)_\lambda$-injective.\medskip

Observe that $\aleph$-injectivity corresponds to $(\aleph, \aleph)$-injectivity while universal $\aleph$-injectivity corresponds to $(\aleph, 2^\aleph)$-injectivity since
every space with density character $\aleph$ can be embedded into an injective space with density character  $2^\aleph$.\medskip

\noindent \textbf{Definition.} Let $(\alpha, \beta)$ two infinite cardinals, $\alpha\leq \beta$. The space $E$ is said to be of $(\alpha, \beta)$-universal disposition
if given spaces $A, B$ with $\dens A<\alpha$ and  $\dens B<\beta$  and into isometries $u: A\to E$ and $\imath:A\to B$ there is an into isometry $u': B\to E$ such
that $u=u'\imath$. \medskip

One has

\begin{thm} A Banach space is $(\alpha, \beta)_1$-injective if and only if it is a $1$-complemented subspace of a space of $(\alpha, \beta)$-universal disposition.
\end{thm}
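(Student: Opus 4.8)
The plan is to establish the two implications separately, transcribing the templates of Propositions \ref{coimes} and \ref{1asi} while carrying the two density bounds $\alpha$ and $\beta$ through every step; this theorem is meant to be their common generalization, so the architecture should be identical and only the bookkeeping new.

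For the direction that is usually cited, I would first prove directly that a space $U$ of $(\alpha,\beta)$-universal disposition is itself $(\alpha,\beta)_1$-injective, after which the statement for a $1$-complemented subspace $E$ follows by composing any extension with the norm-one projection $U\to E$ (indeed, if $t\colon A\to E$ is extended to $T\colon B\to U$ with $\|T\|\le\|t\|$, then $PT$ extends $t$ into $E$ with the same bound). To prove $U$ is $(\alpha,\beta)_1$-injective, take $A\subset B$ with $\dens A<\alpha$ and $\dens B<\beta$ and $t\colon A\to U$, which after rescaling I may assume of norm one. I would form the push-out of $t$ and the inclusion $A\hookrightarrow B$, obtaining an isometry $u'\colon U\to\PO$ and a contraction $t'\colon B\to\PO$ with $u't=t'\imath$. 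The closed subspaces $\overline{t'(A)}\subset u'(U)$ and $\overline{t'(B)}$ have density characters $<\alpha$ and $<\beta$, and $\phi=(u')^{-1}|_{\overline{t'(A)}}$ is an into isometry $\overline{t'(A)}\to U$. Applying $(\alpha,\beta)$-universal disposition to $\phi$ and to the inclusion $\overline{t'(A)}\hookrightarrow\overline{t'(B)}$ yields an into isometry $\rho\colon\overline{t'(B)}\to U$ extending $\phi$; then $T=\rho t'$ satisfies $T|_A=t$ and $\|T\|\le 1$, as required.

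For the converse, let $\Theta$ be $(\alpha,\beta)_1$-injective and feed the basic multiple push-out construction the data $X=\Theta$, with $\mathfrak J$ the family of all into isometries $u$ having $\dens(\dom u)<\alpha$ and $\dens(\codom u)<\beta$, and $\mathfrak L$ the family of into $\Theta$-valued isometries with domain of density $<\alpha$. I would iterate to an ordinal $\mu$ with $\operatorname{cf}(\mu)\ge\alpha$ --- concretely $\mu=\omega(2^\beta)$, whose cofinality exceeds $\beta\ge\alpha$ --- and set $W=\mathcal S^{\mu}(\Theta)$. That $W$ is of $(\alpha,\beta)$-universal disposition follows exactly as in Proposition \ref{1asi}: given into isometries $j\colon A\to W$ and $i\colon A\to B$ with $\dens A<\alpha$ and $\dens B<\beta$, the cofinality condition forces $j(A)$ (of density $<\alpha\le\operatorname{cf}(\mu)$) into a single stage $\mathcal S^{\gamma}(\Theta)$, whereupon the pair $(i,j)$ lies in $\Gamma_{\gamma}$ and the push-out at stage $\gamma+1$ produces an into isometry $j'\colon B\to\mathcal S^{\gamma+1}(\Theta)\subset W$ extending $j$.

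It remains to show $\Theta$ is $1$-complemented in $W$, which I would do by constructing transfinitely a compatible family of norm-one projections $\pi_\gamma\colon\mathcal S^{\gamma}(\Theta)\to\Theta$ with $\pi_\gamma\imath_{(0,\gamma)}=1_\Theta$ and $\pi_\gamma\imath_{(\delta,\gamma)}=\pi_\delta$. At a successor $\gamma=\gamma_0+1$, for each $(u,t)\in\Gamma_{\gamma_0}$ I extend the contraction $\pi_{\gamma_0}t\colon\dom u\to\Theta$ through $u$ to a norm-one operator $s_{(u,t)}\colon\codom u\to\Theta$ --- this is the single place the hypothesis enters, and it enters precisely with $\dens(\dom u)<\alpha$ and $\dens(\codom u)<\beta$, i.e.\ exactly as $(\alpha,\beta)_1$-injectivity --- assemble the $s_{(u,t)}$ coordinatewise into a contraction $S\colon\ell_1(\Gamma_{\gamma_0},\codom u)\to\Theta$ with $S(\oplus\mathfrak J_{\gamma_0})=\pi_{\gamma_0}(\Sigma\mathfrak L_{\gamma_0})$, and let $\pi_\gamma$ be the contraction delivered by the push-out property; at limit ordinals the compatible $\pi_\delta$ glue to a norm-one projection on the direct limit. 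Then $\pi_\mu$ witnesses the $1$-complementation. I expect the main obstacle to be keeping the asymmetric pair $(\alpha,\beta)$ coherent throughout: checking in the first direction that the push-out keeps $\overline{t'(A)}$ below $\alpha$ and $\overline{t'(B)}$ below $\beta$, arranging $\mathfrak J$ to be exactly the family for which $(\alpha,\beta)_1$-injectivity supplies the coordinatewise extensions, and verifying (routine but necessary) that the projections stay compatible across the limit stages so that the limit projection is genuinely contractive.
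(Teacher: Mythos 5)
Your proposal is correct and follows essentially the same route as the paper: the forward direction via the push-out argument showing that spaces of $(\alpha,\beta)$-universal disposition are themselves $(\alpha,\beta)_1$-injective (which the paper dismisses as ``the obvious proof as before''), and the converse via the multiple push-out construction with $\mathfrak J$ the into isometries from $\mathscr S_\alpha$ into $\mathscr S_\beta$, iterated to a terminal ordinal of cofinality exceeding $\alpha$, followed by the transfinite construction of compatible norm-one projections. The only cosmetic deviation is that you stop at $\omega(2^\beta)$ where the paper uses $\omega(2^\alpha)$ --- both have cofinality $>\alpha$, which is all the argument requires --- and you spell out details (the compatibility of the projections across successor and limit stages) that the paper compresses into ``the argument can be transfinitely iterated.''
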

\begin{proof} The proof is a variation of those of Propositions \ref{coimes} and \ref{1asi}. The three claims involved are:\medskip

\textbf{Claim. Spaces of $(\alpha, \beta)$-universal disposition are $(\alpha, \beta)_1$-injective}\medskip

with the obvious proof as before. Therefore, $1$-complemented subspaces of spaces of $(\alpha, \beta)$-universal disposition also are $(\alpha, \beta)_1$-injective. To show show the converse, let $\Theta$ be a  $(\alpha, \beta)_1$-injective space. Set now the input data as:
\begin{itemize}
\item $X=\Theta$.
\item $\mathfrak M = \mathscr S_\alpha$
\item The family $\mathfrak{J}$ of all into isometries acting from spaces in $\mathscr S_\alpha$ into spaces in $\mathscr S_\beta$.
\item The family $\mathfrak{L}$ of into $\Theta$-valued isometries defined on elements of $\mathscr S_\alpha$.
\item The ordinal $\omega(2^\alpha)$.
\end{itemize}
Let us call $\mathcal S^{\alpha, \beta}(\Theta)$ the resulting space $\mathcal S^{\omega(2^\aleph)}(\Theta)$.\medskip

\noindent \textbf{Claim. For any choice of $X$, the space $\mathcal S^{\alpha, \beta}(X)$ is a space of  $(\alpha, \beta)$-universal disposition.}\medskip

\noindent\emph{Proof of the Claim.} Let $A \in \mathscr S_\alpha$, $B \in \mathscr S_\beta$ and  $i: A\to B$ an into isometry. Let $j:A\to X$ an into isometry.
Since the cofinality of $2^\alpha$ is greater than $\alpha$, the image of $j$ must fall into some $\mathcal S^\gamma(X)$ for some $\gamma <2^\alpha$. Now, since both $j$ and $i$ are part of the amalgam of operators respect to with one does push out, there is an into isometry $j': B\to \mathcal S^{\gamma+1}(X)$ extending $j$. In particular, there is an into isometry $j': B\to \mathcal S^{\alpha, \beta}(X)$ extending $j$.\hfill{$\square$}\medskip

\noindent \textbf{Claim. If $\Theta$ is $(\alpha, \beta)_1$-injective then it is $1$-complemented in $\mathcal S^{\alpha, \beta}(\Theta)$}\medskip

\noindent\emph{Proof of the Claim.} It is clear operators from $\mathscr S_\alpha$ into $\Theta$ extend with the same norm to superspaces in $\mathscr S_\beta$. Thus occurs to the operators forming $\Sigma \mathfrak L$. Pick the identity $1_\Theta$ and observe that the
composition $1_\Theta \Sigma \mathfrak L$ extends to an operator $\ell_1(\Gamma, \codom
u) \to \Theta$; hence, by the push-out property, there is an operator
 $\mathcal S^1(\Theta)\to \Theta$ that extends $1_\Theta$ through $\imath_1: \Theta \to  \mathcal S^1(\Theta)$. In other words, $\Theta$ is $1$-complemented in  $\mathcal S^1(\Theta)$. The argument can be transfinitely iterated to conclude that $\Theta$ is $1$-complemented in  $\mathcal S^{\alpha, \beta}(\Theta)$ \hfill{$\square$}

 This concludes the proof of the Theorem.  \end{proof}

The spaces  $\mathcal S^{\alpha, \beta}(X)$ are moreover the first examples of spaces of $(\alpha, \beta)$-universal disposition for $\alpha\neq \beta$. Replacing isometries by isomorphisms one would get the very different notion of space $(\alpha, \beta)$-automorphic (see \cite{accgmLN}). It is straightforward that every Banach space of $(\alpha, \beta)$-universal disposition is $(\alpha, \beta)$-automorphic ; while the converse fails: A good classical example is the space $\ell_\infty$, which  has the property that every isomorphism between two separable subspaces can be extended to an automorphism of $\ell_\infty$  \cite{lindrose} (see also \cite[Prop. 2.5.2]{accgmLN}): or, in other words, it is $(\aleph_1, \mathfrak c^+)$-automorphic. However, it is not of $(\aleph_1, \mathfrak c^+)$-universal disposition since not every isometry between two finite dimensional subspaces can be extended to an isometry of $\ell_\infty$ (see \cite{accgm2} and  \cite[Section 3.3.4]{accgmLN})\medskip

To get our analysis complete it only remains to analyze the case $\aleph_0$. The spaces of universal $\aleph_0$-disposition are obviously the spaces of  $\mathfrak F$-universal disposition; while the spaces $\aleph_0$-injective are called in \cite{cgk} and \cite[Def.1.3]{accgmLN} locally injective spaces and shown to be the $\mathcal L_\infty$-spaces.
The meaning of \emph{locally $\lambda$-injective} should be clear; as well as the fact that $\mathcal L_{\infty, \lambda}$-spaces are locally $\lambda$-injective. The following result should be clear by now:

\begin{prop} A Banach space is locally $1$-injective if and only if it is a $1$-complemented subspace of a space of $\mathfrak F$-universal disposition.
\end{prop}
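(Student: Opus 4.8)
The plan is to observe that this proposition is exactly the bottom instance $\alpha=\beta=\aleph_0$ of the Theorem just proved, and then to check that the single genuinely infinitary ingredient of that proof — the cofinality estimate driving the universal-disposition claim — survives the passage to $\aleph_0$.

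First I would fix the dictionary, using the identifications recorded immediately above the proposition: the relevant class of spaces attached to $\aleph_0$ is $\mathfrak F$, so that $(\aleph_0,\aleph_0)$-universal disposition is $\mathfrak F$-universal disposition and $(\aleph_0,\aleph_0)_1$-injectivity is local $1$-injectivity. Under this dictionary the statement reads verbatim as the Theorem with $\alpha=\beta=\aleph_0$, and the forward implication is immediate: the first claim gives that spaces of $\mathfrak F$-universal disposition are locally $1$-injective, whence so are their $1$-complemented subspaces.

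For the converse I would run the basic multiple push-out with $X=\Theta$, $\mathfrak M=\mathfrak F$, $\mathfrak J$ the into isometries between finite-dimensional spaces, $\mathfrak L$ the into $\Theta$-valued isometries on finite-dimensional spaces, and the ordinal $\omega(2^{\aleph_0})=\omega(\mathfrak c)$, producing $\mathcal S^{\aleph_0,\aleph_0}(\Theta)$. The complementation claim then transcribes word for word: local $1$-injectivity is precisely the assertion that an operator from a finite-dimensional space into $\Theta$ extends with the same norm to any finite-dimensional superspace, which is what lets $1_\Theta\Sigma\mathfrak L$ extend to an operator $\ell_1(\Gamma,\codom u)\to\Theta$; the $\ell_1$ structure keeps the extension of norm one, the push-out property yields an operator $\mathcal S^1(\Theta)\to\Theta$ extending $1_\Theta$ through $\imath_1:\Theta\to\mathcal S^1(\Theta)$, and transfinite iteration $1$-complements $\Theta$ in $\mathcal S^{\aleph_0,\aleph_0}(\Theta)$.

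The one step deserving genuine attention — and the only place the argument could break at the smallest cardinal — is the universal-disposition claim, where one must send an isometry $j:A\to\mathcal S^{\aleph_0,\aleph_0}(\Theta)$ with $A$ finite-dimensional into some stage $\mathcal S^\gamma$, $\gamma<2^{\aleph_0}$, before performing the push-out that extends it along a given $i:A\to B$. Because the direct limit is the completion of the union of the stages, this is not automatic from finite-dimensionality of $A$: each vector of $j(A)$ is a priori only a limit of vectors coming from countably many stages. What rescues it is exactly that the cofinality of $2^{\aleph_0}$ is greater than $\aleph_0$, which is König's theorem; this bounds those countably many stages strictly below $\omega(\mathfrak c)$ and so places all of $j(A)$ in a single $\mathcal S^\gamma$ with $\gamma<2^{\aleph_0}$, after which the claim finishes as before since $i$ and $j$ form part of the amalgam over which one pushes out. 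I expect this cofinality verification to be the sole substantive point, every other ingredient being a direct transcription of the Theorem's proof.
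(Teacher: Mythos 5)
Your proposal is correct and matches the paper's intent exactly: the paper offers no written proof (``The following result should be clear by now''), treating the proposition as the instance $\alpha=\beta=\aleph_0$ of the preceding Theorem, which is precisely the reduction you carry out. Your extra verification of the one delicate point --- that $\mathrm{cf}(2^{\aleph_0})>\aleph_0$ is what makes the union of the stages already complete, so a finite-dimensional $j(A)$ really does sit inside a single $\mathcal S^\gamma$ with $\gamma<2^{\aleph_0}$ --- is a legitimate and welcome tightening of the argument the paper leaves implicit, not a departure from it.
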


The list of locally $1$-injective spaces however is not simple: $1$-separably injective spaces are obviously locally $1$-injective, but $c_0$, who is only $2$-separably injective, is also locally $1$-injective. The case of $c_0$ can be generalized. Recall that a Banach space is said to be polyhedral if the closed unit ball of every finite dimensional subspace is the closed convex hull of a finite number of points. Then every Lindenstrauss polyhedral space is locally $1$-injective as it follows from \cite[Prop. 7.2]{lindmem}.
However, spaces of $\mathfrak  F$-universal disposition cannot be polyhedral since they contain isometric copies of all separable spaces. On the other hand, even $c$, which is isomorphic to $c_0$, is not locally $1$-injective: to show this, observe that, in the spirit of Lindenstrauss \cite{lindmem} one has:

\begin{lema} The following properties are equivalent:
\begin{enumerate}
\item $X$ is locally $1$-injective.
\item Let $F\subset G$ be Banach spaces so that $\dim G/F<+\infty$. Every finite rank operator $\tau: F\to X$ can be extended to an operator $T:G\to X$ with the same norm.
\item Let $F\subset G$ be Banach spaces so that $\dim G/F<+\infty$. Compact operators $\tau: F\to X$ can be extended to operators $T: G\to X$ with the same norm.
\end{enumerate}
\end{lema}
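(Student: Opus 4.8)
The implication $(3)\Rightarrow(1)$ is immediate: if $G$ is finite dimensional and $F\subset G$, then any $\tau\colon F\to X$ automatically has finite rank, hence is compact, and $\dim G/F\le\dim G<\infty$, so $(3)$ returns exactly the same-norm extension demanded by local $1$-injectivity. Since finite-rank operators are compact, $(3)\Rightarrow(2)$ is trivial, and $(2)\Rightarrow(1)$ follows by the same remark that operators out of finite dimensional spaces have finite rank. Thus the whole content is carried by $(1)\Rightarrow(2)$ and $(2)\Rightarrow(3)$, giving the cycle $(1)\Rightarrow(2)\Rightarrow(3)\Rightarrow(1)$.

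For $(1)\Rightarrow(2)$ the plan is to quotient out the kernel. Let $F\subset G$ with $\dim G/F<\infty$ and let $\tau\colon F\to X$ have finite rank $r$. Then $Y:=\ker\tau$ has finite codimension $r$ in $F$, so $Y\subset F\subset G$ with $\dim G/Y=\dim G/F+\dim F/Y<\infty$. One checks that $F/Y$ sits isometrically in the finite dimensional space $G/Y$ (the infimum defining both quotient norms runs over the same set $Y$ and against the same ambient norm) and that $\tau$ factors as $\tau=\tilde\tau\circ\pi_F$ through the metric surjection $\pi_F\colon F\to F/Y$, with $\|\tilde\tau\|=\|\tau\|$. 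Now $F/Y\subset G/Y$ is an inclusion of finite dimensional spaces, so local $1$-injectivity provides $\tilde T\colon G/Y\to X$ extending $\tilde\tau$ with $\|\tilde T\|=\|\tau\|$. Setting $T:=\tilde T\circ\pi_G$, with $\pi_G\colon G\to G/Y$ the quotient map, yields the extension: $T|_F=\tilde T\circ(F/Y\hookrightarrow G/Y)\circ\pi_F=\tilde\tau\circ\pi_F=\tau$, while $\|T\|\le\|\tilde T\|=\|\tau\|$ and the reverse inequality is automatic. The point is that finite rank forces $\ker\tau$ to be of finite codimension, collapsing everything to a genuinely finite dimensional extension, where $(1)$ speaks directly.

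For $(2)\Rightarrow(3)$ I would first reduce to codimension one. Writing $F=G_0\subset G_1\subset\cdots\subset G_n=G$ with $\dim G_{k+1}/G_k=1$, it suffices to extend one dimension at a time, since a same-norm one-dimensional extension of a compact operator is again compact: if $T_1|_F=\tau$ and $G_1=F\oplus\mathbb{R}v$, then $T_1(B_{G_1})$ lies in the sum of the relatively compact image of a bounded multiple of $B_F$ and a bounded segment, hence is relatively compact. So fix $G=F\oplus\mathbb{R}v$ and $\tau$ compact of norm one. Choosing $x_0:=Tv$, a norm-one extension exists precisely when $\|\tau f+x_0\|\le\|f+v\|$ for all $f\in F$ (real scalars; the complex case is analogous), i.e.\ when the closed balls $\overline B(-\tau f,\|f+v\|)$, $f\in F$, have a common point in $X$. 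These balls pairwise intersect, and, crucially, every \emph{finite} subfamily has a common point: applying $(2)$ to the finite dimensional pair $\mathrm{span}(f_1,\dots,f_m)\subset\mathrm{span}(f_1,\dots,f_m,v)$ produces a norm-one extension whose value at $v$ lies in $\bigcap_i\overline B(-\tau f_i,\|f_i+v\|)$. Hence the family has the finite intersection property.

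The main obstacle is to upgrade this finite intersection property to a genuine common point \emph{inside $X$} and with the \emph{exact} norm, rather than merely in an injective superspace. Embedding $X$ isometrically into an injective $W$, the map $\tau\colon F\to X\subset W$ extends to $\hat T\colon G\to W$ of norm one, so $\hat x_0:=\hat Tv$ is a common point of the balls in $W$; but a priori one only gets $\operatorname{dist}(\hat x_0,X)\le\operatorname{dist}(v,F)$. Here is where compactness must be spent. The enabling fact is that for compact $\tau$ there is, for each $\varepsilon>0$, a finite-codimensional $Y\subset F$ with $\|\tau|_Y\|\le\varepsilon$ (dualise: $\tau^\ast(B_{X^\ast})$ is totally bounded in $F^\ast$, and one takes $Y$ annihilated by a finite $\varepsilon$-net). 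This lets $\tau$ be treated, modulo $\varepsilon$, as a finite-rank operator and, via the quotient argument of the previous paragraph applied to $G/Y$, produces extensions $T_\varepsilon\colon G\to X$ with $\|T_\varepsilon\|\le1+\varepsilon$ and $\|T_\varepsilon|_F-\tau\|\to0$. The delicate remaining step is to pass to the limit while keeping the range in $X$: one extracts a weak$^\ast$ cluster point $T$ of the $T_\varepsilon$ in $\mathcal L(G,X^{\ast\ast})$, so that $T|_F=\tau$ and $\|T\|\le1$, and then argues that $T$ actually takes values in $X$, invoking that a compact operator satisfies $\tau^{\ast\ast}(F^{\ast\ast})\subset X$, whence the only directions that might escape $X$ are the finitely many images $Tv_1,\dots,Tv_n$, which compactness must confine back to $X$. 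Securing the inclusion of the range in $X$ \emph{and} the exact norm simultaneously is the heart of the argument and the precise point at which the compactness hypothesis is indispensable.
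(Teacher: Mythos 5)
Your treatment of $(1)\Rightarrow(2)$ is correct and is essentially the paper's argument in different clothing: the paper pushes out along $\tau$ so as to replace $G$ by the finite-dimensional space $\PO$ containing $\tau(F)$, while you quotient by $\ker\tau$; either way one collapses to an inclusion of finite-dimensional spaces where local $1$-injectivity applies verbatim, and your verifications that $F/\ker\tau\hookrightarrow G/\ker\tau$ is isometric and that $\|\tilde\tau\|=\|\tau\|$ are sound. The trivial implications are also fine.

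The proof of $(2)\Rightarrow(3)$, however, has a genuine gap, and you flag it yourself: after reducing to a one-codimensional extension and showing that the balls $\overline B(-\tau f,\|f+v\|)$, $f\in F$, have the finite intersection property, you never produce a common point lying in $X$. The finite intersection property for closed balls does not by itself give a nonempty intersection in a general Banach space --- that is an intersection property of balls in the sense of Lindenstrauss, and it is precisely the kind of statement one is trying to establish for $X$ --- and the proposed rescue (a weak$^{\ast}$ cluster point $T\in\mathcal L(G,X^{\ast\ast})$ followed by ``compactness confines $Tv$ back to $X$'') does not work as stated: the cluster point extends $\tau$ with norm $\le 1$, but nothing forces $Tv\in X$, since $T$ need not be compact and $\tau^{\ast\ast}(F^{\ast\ast})\subset X$ controls only the values of $T$ on $F$, not at $v$. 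The missing idea is structural: from $(2)$ one already has $(1)$, and locally injective spaces are exactly the $\mathcal L_\infty$-spaces (the result from \cite{cgk} and \cite[Def.~1.3]{accgmLN} that the paper invokes); hence $X$ has the bounded approximation property, so every compact $\tau\colon F\to X$ is a norm limit of finite-rank operators, to each of which $(2)$ applies, and $(2)\Rightarrow(3)$ reduces to a perturbation argument for a sequence of equal-norm finite-rank extensions rather than a ball-intersection argument from scratch. Without that $\mathcal L_\infty$/BAP input your route has no means of approximating $\tau$ by operators that $(2)$ can see, and the limiting step you call ``delicate'' is exactly the unproved heart of the matter.
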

\begin{proof} To show that (1) implies (2), consider the push-out diagram

$$\begin{CD}
F@>>> G @>>> G/F\\
@V{\tau}VV @VV{\tau'}V  @|\\
\tau(F) @>>> \PO @>>> G/F\\
@V{\imath}VV\\
X
\end{CD}$$

where $\imath$ is the canonical inclusion. The local $1$-injectivity allows to extend $\imath$ to a norm one operator $I: \PO\to X$ since $\dim \PO/\tau(F) = \dim G/F <+\infty$, which shows that $\PO$ is finite-dimensional. Therefore $I\tau'$ is an extension of $\tau$ with the same norm. We show now that (2) implies (3): as we have already mentioned, locally injective spaces are $\mathcal L_\infty$-spaces, end thus they have the Bounded Approximation Property, which makes any compact operator approximable by finite rank operators. Now, if every finite rank operator admits an equal norm extension, every operator that is in the closure of finite-rank operators also admits an equal norm extension. Finally, that (3) implies (1) is obvious.
\end{proof}

\begin{cor} The space $c$ is not locally $1$-injective
\end{cor}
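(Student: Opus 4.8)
The plan is to apply the preceding Lemma in the direction (2)$\Rightarrow$(1) contrapositively: to see that $c$ is not locally $1$-injective it suffices to produce finite-dimensional spaces $F\subset G$ and a norm-one finite-rank operator $\tau\colon F\to c$ admitting no norm-one extension to $G$. I would organise everything around the coordinate functionals of an operator into $c$. Writing $\tau_k=e_k^{*}\circ\tau\in F^{*}$ and $\tau_\infty=\lim_k\tau_k$, an operator $\tau\colon F\to c$ (normalised so that $\|\tau\|=1$) is the same thing as a sequence $(\tau_k)$ in $B_{F^{*}}$ converging in norm to $\tau_\infty$, and a norm-one extension $T\colon G\to c$ is exactly a choice of norm-one extensions $T_k\in G^{*}$ of the $\tau_k$ whose values $T_k(e)$ on the new directions $e\in G$ form convergent sequences (so that $Tg\in c$ for every $g$). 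The whole point is the role of $\tau_\infty$: for $c_0$ the limit functional is $0$, so $\|\tau_k\|\to 0$, any Hahn--Banach extensions satisfy $\|T_k\|\to 0$, and the resulting $T$ automatically lands in $c_0$; this is precisely why $c_0$ is locally $1$-injective. For $c$ one has $\tau_\infty\neq 0$, and the tension is between extending each $\tau_k$ \emph{isometrically} and keeping the values $T_k(e)$ convergent.

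Before constructing the counterexample I would record two easy facts that pin it down. First, a codimension-one extension into $c$ always exists: for a single new direction the admissible values $T_k(e)$ fill an interval $[L_k,U_k]$ whose endpoints depend continuously on $\tau_k$, so $[L_k,U_k]\to[L_\infty,U_\infty]$ and a convergent selection is produced by clamping a fixed limit value. Second, at constant $1+\varepsilon$ the extension always exists: fixing a norm-one extension $S_\infty$ of $\tau_\infty$ and a projection $P\colon G\to F$, the functionals $T_k=S_\infty+(\tau_k-\tau_\infty)\circ P$ extend $\tau_k$, have norm $\le 1+\|\tau_k-\tau_\infty\|\,\|P\|$, and have convergent values; hence $c$ is locally $(1+\varepsilon)$-injective for every $\varepsilon>0$ (it is an $\mathcal L_{\infty,1+\varepsilon}$-space), so the failure we seek is a genuinely isometric one. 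A third, equally forcing, observation is that $F$ itself must fail to be $1$-injective: if $F\cong\ell_\infty^{n}$ it is $1$-complemented in every superspace $G$ by a norm-one $P$, and then $\tau\circ P$ extends $\tau$ isometrically. Together these show the example must have $\dim G/F\ge 2$, must force the limit functional to be \emph{extremal}, $\|\tau_\infty\|_{F^{*}}=1$, and must use an $F$ that is not $1$-injective.

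The construction I would phrase through the value-sets $R_k\subset\mathbb R^{\dim G/F}$ of admissible extension values: the extension exists if and only if the convex compact sets $R_k$ admit a convergent selection, and by Michael's selection theorem (the index set $\mathbb N\cup\{\infty\}$ being compact metric and the values convex and compact) this is automatic once $k\mapsto R_k$ is lower semicontinuous. So the task is to choose $F\subset G$ with $\dim G/F=2$ and $\tau$ whose coordinate functionals approach the extremal $\tau_\infty$ \emph{along an oscillating curved path}, and to prescribe the dual ball $B_{G^{*}}$ so that the fibre map $\tau\mapsto R(\tau)$ jumps at $\tau_\infty$, the fibres reached along the two sides of the path accumulating on two \emph{disjoint} parts of the fibre over $\tau_\infty$. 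With such a $\tau$ the even- and odd-indexed $R_k$ then cluster on the two disjoint pieces, no sequence $T_k(e)$ converges, and $\tau$ has no isometric extension to $G$; the Lemma then gives that $c$ is not locally $1$-injective. A convenient vehicle is the three-dimensional subspace $E=\langle\mathbf 1,y,z\rangle\subset c$ with $y_k=(-1)^{k}/k$ and $z_k=1/k^{2}=y_k^{2}$, so that the evaluations $\mathrm{ev}_k|_E$ lie on a parabolic arc and accumulate at the norm-one functional $\mathrm{ev}_0|_E=\mathbf 1^{*}$ with infinitely many extreme points; in particular $B_{E^{*}}$ is not a polytope, $E\not\cong\ell_\infty^{3}$ is not $1$-injective, and the sign oscillation in $y_k$ supplies the two-sided approach to $\mathbf 1^{*}$ that one exploits in shaping $B_{G^{*}}$.

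The main obstacle is exactly this last step: producing an explicit finite-dimensional $G\supset E$ whose dual ball is sliced discontinuously at $\mathbf 1^{*}$ and then verifying rigorously that \emph{no} convergent selection from the $R_k$ survives, i.e.\ that lower semicontinuity of the fibre map genuinely fails there and that the disjointness of the one-sided limits cannot be circumvented. Convexity of $B_{G^{*}}$ is what makes this delicate: it forces the fibres to vary continuously over the \emph{interior} of $B_{F^{*}}$ and confines all the bad behaviour to the extremal point $\tau_\infty$, while the curved, infinitely-extreme-point approach is what prevents the isometric extension from turning the corner. Concretely I would build the relevant slice of $B_{G^{*}}$ as a tetrahedron-type body over the two-dimensional shadow, whose fibre over $\tau_\infty$ is a segment and whose nearby fibres slide to the two endpoints, and then glue it into a full convex body with shadow $B_{F^{*}}$. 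Once $G$ is in hand the remaining checks---that $\tau$ is finite rank, that $\|\tau\|=1$, and that every candidate extension has norm $>1$ or leaves $c$---are routine.
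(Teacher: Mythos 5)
Your overall strategy---apply the Lemma contrapositively by exhibiting a norm-one operator into $c$ with no equal-norm extension to a finite-codimensional superspace---is the same as the paper's in spirit, but the paper discharges this in one line by invoking the example in \cite{johnzipproc} of a compact operator $H\to c$ that cannot be extended with the same norm to a one-dimension-larger superspace, and then applying item (3) of the Lemma. You instead set out to build a finite-dimensional example for item (2) from scratch, and this is where the gap lies: the construction is never carried out. What you yourself call ``the main obstacle''---producing an explicit $G\supset E$ whose dual ball has the required discontinuous fibre over $\tau_\infty$ and verifying rigorously that no convergent selection from the sets $R_k$ survives---is the entire mathematical content of the corollary; everything preceding it is framing. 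As written, the argument stops exactly where the proof would have to begin, so there is no proof.

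There is also a local problem with one of your preliminary ``easy facts.'' The claim that codimension-one extensions into $c$ always exist rests on the assertion that the endpoints $L_k,U_k$ of the admissible interval depend continuously on $\tau_k$. That continuity holds on the interior of $B_{F^{*}}$, but it is precisely what can fail at norm-one functionals---and by your own third observation the limit functional is forced to satisfy $\|\tau_\infty\|_{F^{*}}=1$, so the sequence $(\tau_k)$ lives exactly where lower semicontinuity of the fibre map is in doubt. Indeed, the example the paper cites is a codimension-one failure (from an infinite-dimensional subspace $H$), so the claim is false as stated for general $F$ and unproven for finite-dimensional $F$; you cannot use it to conclude that your eventual example ``must have $\dim G/F\ge 2$.'' The clean fix is either to complete the explicit construction with full verification of the non-existence of a convergent selection, or simply to quote the Johnson--Zippin example together with item (3) of the Lemma, as the paper does.
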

\begin{proof} This is consequence of an example in \cite{johnzipproc} of a compact operator $H\to c$ that cannot be extended with the same norm to one more dimension.\end{proof}
\newpage

To extend the results in this paper to the category of $p$-Banach spaces, a few items have to be taken into account. On one hand, the multiple push-out construction requires only minor adjustments to work (these can be seen explicitly in \cite{cgk}) while the notions of $p$-Banach space universal disposition and separably injective can be translated verbatim. The fact that a space of universal disposition for separable $p$-Banach spaces is also separably injective is straightforward and can be found in \cite[Prop. 3.50 (2)]{accgmLN}.
Since, as we have already shown, spaces of $(\alpha, \beta)$-universal disposition are $(\alpha, \beta)_1$-injective, one faces an apparent contradiction with
the fact that no universally separably injective $p$-Banach spaces exist at all (cf. \cite[Prop. 3.45]{accgmLN}. The key point is that $(\alpha, 2^\alpha)$-injective $p$-Banach spaces are no longer universally $\alpha$-injective since no injective $p$-Banach spaces exist at all. It is however still true that

\begin{thm} A $p$-Banach space is $(\alpha, \beta)_1$-injective if and only if it is a $1$-complemented subspace of a space of $(\alpha, \beta)$-universal disposition.
\end{thm}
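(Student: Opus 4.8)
The plan is to run the proof of the preceding Banach-space Theorem essentially verbatim, the single change being that the multiple push-out construction is carried out over $\ell_p(\Gamma,\cdot)$ rather than $\ell_1(\Gamma,\cdot)$; by \cite{cgk} this construction and its push-out universal property remain available in the category of $p$-Banach spaces. The easy implication is immediate and $p$-insensitive: the first Claim, that spaces of $(\alpha,\beta)$-universal disposition are $(\alpha,\beta)_1$-injective, is a purely isometric amalgamation statement and transfers unchanged, so $1$-complemented subspaces of such spaces are again $(\alpha,\beta)_1$-injective. For the converse I would fix a $(\alpha,\beta)_1$-injective $p$-Banach space $\Theta$ and feed the same input data as before ($X=\Theta$, $\mathfrak M=\mathscr S_\alpha$, $\mathfrak J$ the into-isometries from $\mathscr S_\alpha$ into $\mathscr S_\beta$, $\mathfrak L$ the $\Theta$-valued into-isometries on $\mathscr S_\alpha$, and the ordinal $\omega(2^\alpha)$) to produce $\mathcal S^{\alpha,\beta}(\Theta)$.

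The universal-disposition Claim needs no new idea. Its only ingredient is the cofinality inequality $\operatorname{cf}(2^\alpha)>\alpha$, which holds by K\"onig's theorem independently of $p$; this forces the image of any into-isometry $j\colon A\to\mathcal S^{\alpha,\beta}(\Theta)$ with $\dens A<\alpha$ to land inside some $\mathcal S^\gamma(\Theta)$ with $\gamma<2^\alpha$. Then $j$ lies in the amalgam $\mathfrak L_\gamma$ and $i\colon A\to B$ (with $\dens B<\beta$) lies in $\mathfrak J$, so the push-out at stage $\gamma+1$ delivers the into-isometry $j'\colon B\to\mathcal S^{\gamma+1}(\Theta)\hookrightarrow\mathcal S^{\alpha,\beta}(\Theta)$ extending $j$.

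The $1$-complementation Claim is the only place where $p$ genuinely intervenes, and it is where I expect the main (though mild) work to sit. For each $(u,t)\in\Gamma$ the into-isometry $t\colon\dom u\to\Theta$ has $\dom u\in\mathscr S_\alpha$ and $\codom u\in\mathscr S_\beta$, so $(\alpha,\beta)_1$-injectivity of $\Theta$ extends it to $\widetilde t\colon\codom u\to\Theta$ with $\|\widetilde t\|=1$. Assembling these gives a map $\ell_p(\Gamma,\codom u)\to\Theta$, $(y_{(u,t)})\mapsto\sum\widetilde t(y_{(u,t)})$, which extends $\Sigma\mathfrak L$ along $\oplus\mathfrak J$; the point to verify is that it is a contraction, and here one uses $p$-subadditivity, $\|\sum\widetilde t(y_{(u,t)})\|^p\le\sum\|\widetilde t(y_{(u,t)})\|^p\le\sum\|y_{(u,t)}\|^p$, so its norm is at most that of $(y_{(u,t)})$ in $\ell_p(\Gamma,\codom u)$. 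The push-out property then produces a norm-one operator $\mathcal S^1(\Theta)\to\Theta$ restricting to $1_\Theta$ on $\Theta$, that is, a norm-one retraction, and transfinite iteration yields that $\Theta$ is $1$-complemented in $\mathcal S^{\alpha,\beta}(\Theta)$. I would stress that the non-existence of injective $p$-Banach spaces is \emph{not} an obstruction: the construction never invokes injectivity of any ambient space, only the $(\alpha,\beta)_1$-injectivity of $\Theta$ itself, so the apparent tension noted above evaporates.
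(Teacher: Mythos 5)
Your proposal is correct and follows exactly the route the paper intends: the paper gives no separate argument for the $p$-Banach case beyond remarking that the multiple push-out construction transfers with minor adjustments (as in \cite{cgk}) and that the notions translate verbatim, and you have simply carried out those adjustments explicitly, with the $p$-subadditivity estimate supplying the contractivity of the assembled extension and the same cofinality and push-out arguments handling the two remaining claims. Your closing remark that the non-existence of injective $p$-Banach spaces is irrelevant to the construction matches the paper's own resolution of the apparent tension.
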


\end{document}